\crefname{theorem}{Theorem}{Theorems}
\crefname{Maintheorem}{Theorem}{Theorems}
\crefname{lemma}{Lemma}{Lemmas}
\crefname{remark}{Remark}{Remarks}
\crefname{prop}{Proposition}{Propositions}
\crefname{definition}{Definition}{Definitions}
\crefname{corollary}{Corollary}{Corollaries}
\crefname{cor}{Corollary}{Corollaries}
\crefname{section}{Section}{Sections}
\crefname{subsection}{Subsection}{Sections}
\crefname{figure}{Figure}{Figures}
\crefname{quest}{Question}{Questions}
\crefname{claim}{Claim}{Claims}
\crefname{case}{Case}{Cases}
\crefname{Maincase}{Case}{Cases}
\newtheorem{theorem}{Theorem}[section]
\newcounter{MainTheoremCounter}
\newtheorem{Maintheorem}[MainTheoremCounter]{Theorem}
\theoremstyle{definition}
\newtheorem{definition}[theorem]{Definition}
\newtheorem{cor}[theorem]{Corollary}
\numberwithin{equation}{section}
\begin{document}\title{Veech's theorem of higher order}

\author[J.~Qiu]{Jiahao Qiu}

\author[X.~Ye]{Xiangdong Ye}
\address[J.~Qiu, X.~Ye]{Wu Wen-Tsun Key Laboratory of Mathematics, USTC, Chinese Academy of Sciences and
School of Mathematics, University of Science and Technology of China,
Hefei, Anhui, 230026, P.R. China}
\email{qiujh@mail.ustc.edu.cn}
\email{yexd@ustc.edu.cn}

\date{\today}

\begin{abstract}
For an abelian group $G$,
 $\vec{g}=(g_1,\ldots,g_d)\in G^d$ and $\epsilon=(\epsilon(1),\ldots,\epsilon(d))\in \{0,1\}^d$,
let
$\vec{g}\cdot \epsilon=\prod_{i=1}^{d}g_i^{\epsilon(i)}$.
In this paper,
it is shown that
for a minimal system $(X,G)$ with $G$ being abelian,
$(x,y)\in \mathbf{RP}^{[d]}$ if and only if there exists a sequence
$\{\vec{g}_n\}_{n\in \mathbb{N}}\subseteq G^d$ and points $z_{\epsilon}\in X,\epsilon\in \{0,1\}^d$
with $z_{\vec{0}}=y$
such that for every $\epsilon\in \{0,1\}^d\backslash\{ \vec{0}\}$,
\[
\lim_{n\to\infty}(\vec{g}_n\cdot\epsilon)x= z_\epsilon\quad \mathrm{and} \quad
\lim_{n\to\infty}(\vec{g}_n\cdot\epsilon)^{-1}z_{\vec{1}}=z_{\vec{1}-\epsilon},
\]
where $\mathbf{RP}^{[d]}$ is the regionally proximal relation of order $d$.
\end{abstract}
\keywords{Minimal systems, regional proximality of higher order}
\subjclass[2020]{37B05,37B99}
\maketitle

\section{Introduction}

By a \emph{topological dynamical system}  (t.d.s. for short), 
we refer to a pair $(X, G)$, where $X$ is a compact metric space, and $G$ acts  on it as an abelian group of homeomorphisms.

\medskip

In a certain sense, 
an equicontinuous system represents the most fundamental structure within the realm of topological dynamical systems. 
The characterization of the equicontinuous structure relation $S_{\mathrm{eq}}(X)$ for a t.d.s.
$(X,G)$ is one of the earliest problems studied in this field; 
specifically, it involves identifying the smallest closed invariant equivalence relation 
$R(X)$ on $ (X,G)$, such that $ (X/R(X),G)$ is equicontinuous.
It was shown in \cite{EG60} that $ S_{\mathrm{eq}}(X) $ is the smallest closed invariant equivalence relation 
containing the regionally proximal relation $ \mathbf{RP} = \mathbf{RP}(X,G) $. 
Recall that $ (x,y) \in  \mathbf{RP} $ if there exist sequences $ \{x_n\}_{n\in \mathbb{N}} ,\{y_n\}_{n\in \mathbb{N}} \subseteq X, \{g_n\}_{n\in \mathbb{N}}\subseteq G  $ 
such that $\lim\limits_{n\to\infty}x_n= x,\lim\limits_{n\to\infty}y_n=  y$ and $\lim\limits_{n\to\infty}(g_nx_n ,g_ny_n )=  (z,z)$
for some $z \in X$.
Naturally, one might  ask whether $ S_{\mathrm{eq}} = \mathbf{RP} $ holds for any minimal system. 
Veech \cite{VAW68} was the first to provide a positive answer to this question; 
he showed that $ S_{\mathrm{eq}} = \mathbf{RP} $ is indeed valid for all minimal systems.
As a matter of fact, Veech proved
that for a minimal system $(X,G)$, $(x,y)\in  \mathbf{RP}$ if and only if there exists a sequence
$\{g_n\}_{n\in \mathbb{N}}\subseteq G$ and $z\in  X$ such that
\[
\lim\limits_{n\to\infty}g_nx= z
\quad
\mathrm{and}
\quad
\lim\limits_{n\to\infty}g_n^{-1}z= y.
\]

\medskip

Nilpotent structures derived from ergodic systems play a crucial role in the study of ergodic theory and its applications to combinatorial number theory. 
For further details, please refer to \cite{HKbook}.
A natural question arises regarding how to obtain analogous nilpotent structures in the context of topological dynamics.
 In a pioneering study, Host, Kra and Maass introduced the concept of the {\it regionally proximal relation of order $d$} for a t.d.s. $(X,G)$ in their work \cite{HKM}. 
 This notion is denoted by $\mathbf{RP}^{[d]}$ when $G=\mathbb{Z}$, and can be readily extended to any abelian group.
We observe that the regionally proximal relation of order $1$ is equivalent to the classical regionally proximal relation, that is, $\mathbf{RP}^{[1]} = \mathbf{RP}$. 
It is evident that $\mathbf{RP}^{[d]}$ constitutes a closed invariant relation.
For a minimal distal system, the authors in \cite{HKM}
proved  that $\mathbf{RP}^{[d]}$ constitutes an equivalence relation and that the quotient space $X/\mathbf{RP}^{[d]}$ is referred to as the $d$-step pro-nilsystem.
Subsequently, Shao and Ye \cite{SY12} showed that for any minimal system, $\mathbf{RP}^{[d]}$ indeed constitutes an equivalence relation. 
Moreover, $\mathbf{RP}^{[d]}$ possesses what is referred to as the lifting property.
The combined findings from \cite{HKM} and \cite{SY12} indicate that for any minimal system $(X,\mathbb{Z})$, the quotient space $X/\mathbf{RP}^{[d]}$ forms a $d$-step pro-nilsystem. 
Note that the notion of regionally proximal relation of higher order
can be generalized to any topological group, see \cite{GGY} by Glasner, Gutman and Ye.

A systematic investigation into the properties of $\mathbf{RP}^{[d]}$ concerning $\mathbb{Z}$-actions was conducted by Huang, Shao and Ye in \cite{HSY}. 
An open question that remains is whether a characterization of $\mathbf{RP}^{[d]}$ analogous to the one established by Veech for $\mathbf{RP} = \mathbf{RP}^{[1]}$  in \cite{VAW68} can be obtained.
In this paper, we provide an affirmative  answer  to the question by extending Veech's result to the higher order for abelian groups. That is,

\begin{Maintheorem}\label{main-thm}
Let $(X,G)$ be a minimal system with $G$ being abelian and $d\in \mathbb{N}$.
Then $(x,y)\in \mathbf{RP}^{[d]}$ if and only if there exists a sequence
  $\{\vec{g}_n\}_{n\in \mathbb{N}}\subseteq G^d$ and
points $z_{\epsilon}\in X,\epsilon\in \{0,1\}^d$
with $z_{\vec{0}}=y$
  such that for every $\epsilon\in \{0,1\}^d\backslash\{ \vec{0}\}$,
  \[
 \lim_{n\to\infty} (\vec{g}_n\cdot\epsilon)x=z_\epsilon\quad \mathrm{and} \quad
    \lim_{n\to\infty}(\vec{g}_n\cdot\epsilon)^{-1}z_{\vec{1}}=z_{\vec{1}-\epsilon}.
  \]
\end{Maintheorem}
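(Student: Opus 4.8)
The plan is to recognise the displayed two-sided condition as a repackaging of the defining condition for $\mathbf{RP}^{[d]}$, and then, in the hard direction, to upgrade approximate cube data to genuine limits carried by a single sequence, using minimal idempotents exactly as Veech did in the case $d=1$ recalled above \cite{VAW68}. I begin by rewriting the backward family of limits. Put $y_n:=(\vec g_n\cdot\vec 1)^{-1}z_{\vec 1}$. Since $G$ is abelian one has $(\vec g_n\cdot\epsilon)(\vec g_n\cdot\vec 1)^{-1}=(\vec g_n\cdot(\vec 1-\epsilon))^{-1}$, whence $(\vec g_n\cdot\epsilon)y_n=(\vec g_n\cdot(\vec 1-\epsilon))^{-1}z_{\vec 1}$; consequently the conditions $\lim_n(\vec g_n\cdot\epsilon)^{-1}z_{\vec 1}=z_{\vec 1-\epsilon}$ for all $\epsilon\neq\vec 0$ are \emph{equivalent} to the single assertion that $y_n\to z_{\vec 0}=y$ and $(\vec g_n\cdot\epsilon)y_n\to z_\epsilon$ for every $\epsilon\neq\vec 0$. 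Together with the forward family $(\vec g_n\cdot\epsilon)x\to z_\epsilon$, the right-hand side of the theorem thus says: there are $\vec g_n\in G^d$ and $y_n\to y$ such that the two cubes $((\vec g_n\cdot\epsilon)x)_\epsilon$ and $((\vec g_n\cdot\epsilon)y_n)_\epsilon$ tend to points of the cube system $\mathbf Q^{[d]}(X)$ agreeing in every coordinate $\epsilon\neq\vec 0$ (with common values $z_\epsilon$), while their $\vec 0$-coordinates are $x$ and $y$.

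For the ``if'' direction this reformulation makes the conclusion immediate: given $\delta>0$, choose $n$ so large that $d(y_n,y)<\delta$ and $d((\vec g_n\cdot\epsilon)x,(\vec g_n\cdot\epsilon)y_n)<\delta$ for all $\epsilon\neq\vec 0$; then $x'=x$, $y'=y_n$ and $\vec g=\vec g_n$ witness the defining condition of $\mathbf{RP}^{[d]}$ for $(x,y)$, and the closedness of $\mathbf{RP}^{[d]}$ completes the argument.

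The ``only if'' direction is the substantial one. From $(x,y)\in\mathbf{RP}^{[d]}$ I would first extract the approximate two-cube data above, with bases tending to $x$ and $y$, and then perform the upgrades. Fixing the base of the forward cube to be exactly $x$, and replacing approximate agreement by genuine limits, are achieved by passing to the enveloping semigroup $E(X)$ and choosing a minimal idempotent fixing $x$, in the spirit of Veech's original argument. The genuinely new feature is the backward return $(\vec g_n\cdot\epsilon)^{-1}z_{\vec 1}\to z_{\vec 1-\epsilon}$. Its coordinate $\epsilon=\vec 1$ reads $\lim_n(\vec g_n\cdot\vec 1)^{-1}z_{\vec 1}=y$ with $z_{\vec 1}=\lim_n(\vec g_n\cdot\vec 1)x$; writing $h_n=\vec g_n\cdot\vec 1$ this is precisely Veech's condition $h_nx\to z_{\vec 1}$, $h_n^{-1}z_{\vec 1}\to y$ for the pair $(x,y)\in\mathbf{RP}^{[d]}\subseteq\mathbf{RP}^{[1]}=\mathbf{RP}$, so the result for $d=1$ applies here \cite{VAW68}. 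For the remaining $\epsilon\neq\vec 0,\vec 1$ the identity $(\vec g_n\cdot\epsilon)^{-1}(\vec g_n\cdot\vec 1)=\vec g_n\cdot(\vec 1-\epsilon)$ shows that, \emph{if} multiplication in $E(X)$ were continuous, the backward limit would automatically equal the forward value $z_{\vec 1-\epsilon}$. I would therefore work in the enveloping semigroup of the cube system $(\mathbf Q^{[d]}(X),\mathcal F^{[d]})$ under the face group and select a single minimal idempotent there fixing the diagonal point $(x,\dots,x)$; such an idempotent governs all $2^d$ coordinates at once, so one minimal left ideal produces one sequence $\vec g_n$ for which the forward cube, the backward returns for $\epsilon\neq\vec 1$, and Veech's return to $y$ in the direction $\vec 1$ hold simultaneously.

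The step I expect to be the main obstacle is exactly this simultaneity. Because multiplication in $E(X)$ is only one-sidedly continuous, the backward returns cannot be read off from the forward cube, and the $2^d-1$ returns (together with the Veech return at $\epsilon=\vec 1$) must all be forced by \emph{one} sequence; coordinating Veech's idempotent argument in the diagonal direction $\vec 1$ with the cube structure in every direction---so that the $\vec 0$-corner of the backward cube lands on $y$ rather than on $x$, which is where the hypothesis $(x,y)\in\mathbf{RP}^{[d]}$ is genuinely used---is the crux. I expect the minimal-idempotent analysis in the cube enveloping semigroup developed by Shao and Ye \cite{SY12} to be the appropriate tool.
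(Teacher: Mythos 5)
Your reformulation of the backward limits is correct and useful: with $y_n=(\vec g_n\cdot\vec 1)^{-1}z_{\vec 1}$ and $G$ abelian, the backward family is indeed equivalent to $y_n\to y$ together with $(\vec g_n\cdot\epsilon)y_n\to z_\epsilon$ for $\epsilon\neq\vec 0$, and your proof of the ``if'' direction is essentially the paper's (the paper likewise takes $x'=x$, $y'=(\vec g_n\cdot\vec 1)^{-1}z_{\vec 1}$). But the ``only if'' direction --- the substance of the theorem --- is not proved in your proposal, and the gap is exactly where you say you expect it: the simultaneity of the $2^d-1$ backward returns along \emph{one} sequence. The enveloping-semigroup route you sketch does not deliver this, and not merely for lack of detail: if $p=\lim S_n$ in the Ellis semigroup of $(\mathbf Q^{[d]}(X),\mathcal F^{[d]})$, the limit points of $S_n^{-1}$ bear no usable relation to $p$, precisely because multiplication is only one-sidedly continuous; an idempotent fixing the diagonal point controls forward limits of the face action but says nothing coherent about inverses taken along the same sequence. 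Declaring that a single minimal idempotent ``governs all $2^d$ coordinates at once'' restates the desired conclusion rather than deriving it. You also propose to work in dimension $d$, whereas the hypothesis $(x,y)\in\mathbf{RP}^{[d]}$ enters the paper through a $(d+1)$-dimensional statement (Corollary \ref{key}): the point with $y$ in the single coordinate $\xi=(0,\ldots,0,1)$ and $x$ elsewhere lies in $\overline{\mathcal F^{[d+1]}}(x^{[d+1]})$, and it is the extra $(d+1)$-th direction --- carrying elements $a^{(n)}_{d+1}$ with $a^{(n)}_{d+1}x\to y$ --- that makes the $\vec 0$-corner of the backward cube land on $y$; inside $\mathbf Q^{[d]}(X)$ alone this mechanism is absent.

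The paper's actual resolution is elementary and constructive, with no idempotents at all: fix summable $\delta_n$, choose $\vec a_n\in G^{d+1}$ inductively so that $\rho(h(\vec a_n\cdot\epsilon)x,hx_\epsilon)<\delta_n$ for all $\epsilon$ and all $h$ in the finite set $H_n$ of products of earlier cube elements (uniform continuity of finitely many transformations), and then build $\vec g_n$ as \emph{staggered} telescoping products of the $\vec a_i$ (in the $d=2$ case, $\alpha_n=a_{2n+1}a_{2n+2}A_n$, $\beta_n=b_{2n+1}B_n$ with $A_n,B_n$ products over odd and even indices respectively). Commutativity then lets every expression $(\vec g_n\cdot\epsilon)^{-1}(\vec g_m\cdot\vec 1)$ be rearranged into a product of factors of the form $h(\vec a_i\cdot\epsilon')$ with $h\in H_i$ and $\epsilon'\neq\xi$ --- each a $\delta_i$-perturbation by construction --- except for a single surviving factor $a^{(dn+1)}_{d+1}$ in the case $\epsilon=\vec 1$, which transports $x$ near $y$. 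Cauchy estimates with the summable $\delta_n$ (Claims 1--7) then force all forward limits and all backward returns along the same sequence. If you want to salvage your approach, this bookkeeping construction is the missing idea you would have to supply; the abstract machinery of \cite{SY12} enters the paper only through the minimality of $(\overline{\mathcal F^{[d+1]}}(x^{[d+1]}),\mathcal F^{[d+1]})$ used to obtain Corollary \ref{key}, not as a substitute for the construction.
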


\bigskip

To enhance our understanding of the theorem, we will illustrate the cases when $d=1$, $2$, and $3$.

For $d=1$, this means that there exists a sequence $\{g_n\}_{n\in\mathbb{N}}$ in $G$ and   $z_1\in X$ such that
\[
g_nx\rightarrow z_1\quad \text{and}\quad (g_n)^{-1}z_1\rightarrow z_0=y ,
\]
which is exactly what Veech proved in \cite{VAW68}.
See the following figure.
\[
x \overset{\small g_n}{\longrightarrow} z_1 \overset{\small (g_n)^{-1}}{\longrightarrow} z_0=y
\]

\medskip
For $d=2$, this means that there exists a sequence $\{\vec{g}_n=(g_n^1,g_n^2)\}_{n\in\mathbb{N}}$ in $G^2$ and $z_{(1,0)},z_{(0,1)},$ $ z_{(1,1)}\in X$ such that
$$g_n^1x\rightarrow z_{(1,0)},\  g_n^2x\rightarrow z_{(0,1)},\  (g_n^1g_n^2)x\rightarrow z_{(1,1)},$$
and
$$(g_n^1)^{-1}z_{(1,1)}\rightarrow z_{(0,1)}, \ (g_n^2)^{-1}z_{(1,1)}\rightarrow z_{(1,0)}, \ (g_n^1g_n^2)^{-1}z_{(1,1)}\rightarrow z_{(0,0)}=y.$$
See the following figure.
\begin{center}
\quad \quad
\xymatrix{
     & \ z_{(1,0)} \ &  \\
  x\   \ar[ur]^{{g_n^1}} \ar[dr]_{{g_n^2}} \ar[r]^{ g_n^1g_n^2} & \ z_{(1,1)}\  \ar[d]^{(g_n^1)^{-1}} \ar[r]^{(g_n^1g_n^2)^{-1}} \ar[u]_{(g_n^2)^{-1}}  & \ z_{(0,0)}=y \\
   & \  z_{(0,1)}\  &   }
   \end{center}

\medskip

For $d=3$, this means that there exists a sequence $\{\vec{g}_n=(g_n^1,g_n^2,g_n^3)\}_{n\in\mathbb{N}}$ in $G^3$ and
 $z_{(1,0,0)},z_{(0,1,0)},z_{(0,0,1)},z_{(1,1,0)}, z_{(1,0,1)}   ,z_{(0,1,1)}  ,z_{\vec{1}}= z_{(1,1,1)}\in X$ such that
$$g_n^1x\rightarrow z_{(1,0,0)},\  g_n^2x\rightarrow z_{(0,1,0)},\  g_n^3x\rightarrow z_{(0,0,1)},\ g_n^1g_n^2x\rightarrow z_{(1,1,0)},$$
$$ g_n^2g_n^3x\rightarrow z_{(0,1,1)},\ g_n^1g_n^3x\rightarrow z_{(1,0,1)},\ g_n^1g_n^2g_n^3x\rightarrow z_{(1,1,1)},$$
and
$$(g_n^1)^{-1}z_{\vec{1}}\rightarrow z_{(0,1,1)}, \ (g_n^2)^{-1}z_{\vec{1}}\rightarrow z_{(1,0,1)}, \ (g_n^3)^{-1}z_{\vec{1}}\rightarrow z_{(1,1,0)},
(g_n^1g_n^2)^{-1}z_{\vec{1}}\rightarrow z_{(0,0,1)},$$
$$ (g_n^2g_n^3)^{-1}z_{\vec{1}}\rightarrow z_{(1,0,0)},\ (g_n^1g_n^3)^{-1}z_{\vec{1}}\rightarrow z_{(0,1,0,)},\ (g_n^1g_n^2g_n^3)^{_1}z_{\vec{1}}\rightarrow z_{(0,0,0)}=y.$$

\bigskip

The structure of the paper is organized as follows.  
In Section 2, the basic notions and results used in the paper are introduced.
In Section 3, we present a proof of our main result (\cref{main-thm}).  

\medskip

\noindent {\bf Acknowledgments.}
The authors would like to express their gratitude to Professors Wen Huang and Song Shao, as well as Dr. Hui Xu, for their insightful discussions and remarks. 
The authors also extend their appreciation to the anonymous reviewer for providing valuable comments.
The final version of this paper was completed during the second author's visit to the International Center for Mathematics at SUSTech, and he is thankful to the center for its hospitality throughout his stay.
The first author is supported by NNSF of China (12401243) and USTC Research Funds of the Double First-Class Initiative (YD0010002009).  
The second author is supported by NNSF of China (12031019).

\section{Preliminaries}
In this section we gather definitions and preliminary results that
will be necessary later on.
Let $\mathbb{N}$ and $\mathbb{Z}$ be the sets of all positive integers
and integers respectively.

\subsection{Topological dynamical systems}\

Throughout the paper, $(X,G)$ denotes a \emph{topological dynamical system} (t.d.s. for short),
where $X$ is a compact metric space with a metric $\rho$
and $G$ acts  on it as an abelian group of homeomorphisms.
For $x\in X,\mathcal{O}(x,G)=\{gx:g\in G\}$ denotes the \emph{orbit} of $x$.
A t.d.s. $(X,G)$ is called \emph{minimal} if
every point has a dense orbit in $X$.

\subsection{Dynamical cubespaces}\

Let $X$ be a set and let $d\geqslant 1$ be an integer. We view the element $\epsilon\in\{0, 1\}^d$ as
a sequence $\epsilon=(\epsilon(1),\ldots, \epsilon(d))$, where $\epsilon(i)\in\{0,1\}$ for $1\leqslant i\leqslant d$.
Write $\vec{0}=(0,\ldots,0)\in\{0, 1\}^d $, $\vec{1}=(1,\ldots,1)\in\{0, 1\}^d $
and $\vec{1}-\epsilon=(1-\epsilon(1),\ldots, 1-\epsilon(d))$ for $\epsilon=(\epsilon(1),\ldots, \epsilon(d))\in \{0,1\}^d$.

We denote the set of maps $\{0,1\}^d\to X$ by $X^{[d]}$.
For $\epsilon\in \{0,1\}^d$ and $\mathbf{x}\in X^{[d]}$,
${x}_\epsilon$ will be used to denote the $\epsilon$-component of $\mathbf{x}$.
So any element ${\bf x}\in X^{[d]}$ can be viewed as
${\bf x}=({x}_\epsilon: \epsilon\in \{0,1\}^d).$ For example, when $d=2$, a point ${\bf x}\in X^{[2]}=X^4$
can be written as ${\bf x}=({x}_{00},{x}_{10},{x}_{01},{x}_{11})$.

\medskip

Let $G$ be an abelian group with the unit element $e$.
For $\vec{g}=(g_1,\ldots,g_d)\in G^d$ and $\epsilon\in \{0,1\}^d$,
we define
\[
\vec{g}\cdot \epsilon=\prod_{i=1}^{d}g_i^{\epsilon(i)},
\]
and $h^0=e$ for $h\in G$.

Let $(X,G)$ be a t.d.s. and $d\in \mathbb{N}$.
Let $\mathcal{G}^{[d]}$ be the collection of the elements $S\in G^{[d]}$
that can be written as
\begin{equation*}\label{def-cube}
S  =(g\cdot\prod_{i=1}^{d}g_i^{\epsilon_i}:\epsilon\in \{0,1\}^d),
\end{equation*}
where $g,g_1,\ldots,g_d\in G$. For example, when $d=2$, $\mathcal{G}^{[2]}$ is the subgroup of $G^{[2]}$ generated by
$$\{(g,g,g,g):g\in G\}\cup \{(e,h,e,h):h\in G\}\cup \{(e,e,t,t):t\in G\}.$$

Let $\mathcal{F}^{[d]}$ be the collection of the elements $S\in \mathcal{G}^{[d]}$ with $S_{\vec{0}}=e$.
For example, when $d=2$, $\mathcal{F}^{[2]}$ is the subgroup of $G^{[2]}$ generated by
$$ \{(e,h,e,h):h\in G\}\cup \{(e,e,t,t):t\in G\}.$$

For $x\in X$, we write $x^{[d]}=(x,\ldots,x)\in X^{[d]}$.
Let 
\[
\mathbf{Q}^{[d]}(X)=\overline{\{ S x^{[d]}:x\in X,S\in \mathcal{F}^{[d]}\}}.
\]
We call this set the \emph{dynamical cubespace of dimension $d$} of the t.d.s. $(X,G)$.
For convenience, we denote the orbit closure of $\mathbf{x}\in X^{[d]}$
under $\mathcal{F}^{[d]}$ by $\overline{\mathcal{F}^{[d]}}(\mathbf{x})$,
instead of $\overline{\mathcal{O}(\mathbf{x},\mathcal{F}^{[d]})}$.

\medskip

We need the following result from \cite{SY12}.

\begin{theorem}\label{cube-minimal}
  Let $(X,G)$ be a minimal system and $d\in \mathbb{N}$. Then
  \begin{enumerate}
  \item $(\mathbf{Q}^{[d]}(X),\mathcal{G}^{[d]})$ is a minimal system.
    \item $(\overline{\mathcal{F}^{[d]}}(x^{[d]}),\mathcal{F}^{[d]})$ is minimal for all $x\in X$.
  \end{enumerate}
\end{theorem}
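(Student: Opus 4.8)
The plan is to establish both assertions simultaneously by induction on $d$, exploiting the decomposition $\mathcal{G}^{[d]}=\Delta(G)\cdot\mathcal{F}^{[d]}$, where $\Delta(G)=\{(g,\ldots,g):g\in G\}$ denotes the diagonal and $\Delta(G)\cap\mathcal{F}^{[d]}=\{e\}$. I would first record two elementary structural facts. The diagonal $\{x^{[d]}:x\in X\}$ is a $\Delta(G)$-invariant set on which $\Delta(G)$ acts as a copy of $(X,G)$, hence it is $\Delta(G)$-minimal; and, using the minimality of $(X,G)$ together with the definition of $\mathbf{Q}^{[d]}(X)$, one checks that $\mathbf{Q}^{[d]}(X)=\overline{\mathcal{G}^{[d]}\,x^{[d]}}$ for every $x\in X$, so each cubespace is a single $\mathcal{G}^{[d]}$-orbit closure. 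Consequently, to prove (1) it suffices to show that the $\mathcal{G}^{[d]}$-orbit closure of an arbitrary point of $\mathbf{Q}^{[d]}(X)$ meets the diagonal: once it contains one diagonal point it contains the whole $\Delta(G)$-minimal diagonal, and therefore all of $\overline{\mathcal{G}^{[d]}\,x^{[d]}}=\mathbf{Q}^{[d]}(X)$.

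For the base case $d=1$, the group $\mathcal{F}^{[1]}\cong G$ acts on the second coordinate, so $\overline{\mathcal{F}^{[1]}}(x^{[1]})=\{x\}\times X$ is minimal, while $\mathcal{G}^{[1]}\cong G\times G$ acts coordinatewise on $\mathbf{Q}^{[1]}(X)=X\times X$, a product of minimal systems and hence minimal. For the inductive step I would split off the last coordinate, identifying $X^{[d+1]}=X^{[d]}\times X^{[d]}$ according to the value of $\epsilon(d+1)\in\{0,1\}$. A direct computation from the definition then shows that under this identification $\mathcal{F}^{[d+1]}$ acts as $\{(F,\Delta(g)F):F\in\mathcal{F}^{[d]},\ g\in G\}$, so that projection onto the face $\epsilon(d+1)=0$ is a factor map from $\overline{\mathcal{F}^{[d+1]}}(x^{[d+1]})$ onto $\overline{\mathcal{F}^{[d]}}(x^{[d]})$, which is minimal by the induction hypothesis, while the remaining diagonal freedom $\Delta(g)$ acts along the fiber, i.e. on the face $\epsilon(d+1)=1$, through the full $\mathcal{G}^{[d]}$-action.

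The heart of the matter, and the step I expect to be the main obstacle, is to show that this extension is itself minimal --- equivalently, that the two faces, which are coupled through the shared element $F\in\mathcal{F}^{[d]}$, can nonetheless be driven independently back to the diagonal. To do this I would pass to the enveloping semigroup $E(X,G)$ of the base system, fix a minimal left ideal and a minimal idempotent $u$ with $u\,x_{\vec{0}}=x_{\vec{0}}$, and study the diagonal action of $u^{[d]}=(u,\ldots,u)$ on cube points. The crucial sub-claim is that such an idempotent collapses any point of $\mathbf{Q}^{[d]}(X)$ lying over $x$ into the face orbit closure $\overline{\mathcal{F}^{[d]}}(x^{[d]})$; granting this, one first uses the induction hypotheses for (1) and (2) to move the face $\epsilon(d+1)=0$ to $x^{[d]}$, then applies the idempotent together with the residual $\Delta(g)$ freedom on the face $\epsilon(d+1)=1$ to send the fiber coordinate to $x^{[d]}$ as well, reaching $x^{[d+1]}$ and hence the diagonal. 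Verifying this collapsing property --- that a minimal idempotent of $E(X,G)$ respects the cube structure and lands inside the face system --- is the technical core, and it is precisely here that the minimality supplied by the inductive hypotheses and the group structure of $\mathcal{F}^{[d]}$ must be combined.
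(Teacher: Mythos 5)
First, a point of comparison: the paper itself contains no proof of \cref{cube-minimal} --- it is quoted verbatim from \cite{SY12} --- so your proposal can only be measured against Shao and Ye's original argument. Your structural preliminaries are correct and coincide with the standard setup there: the decomposition $\mathcal{G}^{[d]}=\Delta(G)\,\mathcal{F}^{[d]}$ with trivial intersection, the identity $\mathbf{Q}^{[d]}(X)=\overline{\mathcal{G}^{[d]}x^{[d]}}$ for every $x$ (using minimality of $(X,G)$), the reduction of (1) to showing that every $\mathcal{G}^{[d]}$-orbit closure in $\mathbf{Q}^{[d]}(X)$ meets the diagonal, the base case $d=1$, and the identification of $\mathcal{F}^{[d+1]}$ with $\{(F,\Delta(g)F):F\in\mathcal{F}^{[d]},\,g\in G\}$ after splitting off the last coordinate. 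Shao--Ye likewise induct on $d$ using this face decomposition and enveloping-semigroup techniques, so in outline you are on their track.

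However, the proposal stops exactly where the proof begins, and there are two genuine gaps. (i) The ``crucial sub-claim'' --- that a minimal idempotent $u$ with $ux=x$ maps every $\mathbf{y}\in\mathbf{Q}^{[d]}(X)$ with $y_{\vec{0}}=x$ into $\overline{\mathcal{F}^{[d]}}(x^{[d]})$ --- is asserted, not proved. Since $\Delta(G)\not\subseteq\mathcal{F}^{[d]}$, nothing formal forces $u^{[d]}\mathbf{y}$ back into the face orbit closure; this collapsing property carries essentially the full difficulty of the theorem, and in \cite{SY12} statements of this kind emerge from a chain of Ellis-theoretic lemmas rather than serving as an input, so assuming it is close to assuming the conclusion. (ii) Even granting the sub-claim, your closing step does not work as described. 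Once the bottom face is pinned at $x^{[d]}$ and you sit at a point $(x^{[d]},\mathbf{v})$ with $\mathbf{v}\in\overline{\mathcal{F}^{[d]}}(x^{[d]})$, the only available elements of $\mathcal{F}^{[d+1]}$ are $(F,\Delta(g)F)$: the two faces share the same $F$, so the ``residual $\Delta(g)$ freedom'' moves the top face only diagonally, which in general cannot carry $\mathbf{v}$ to $x^{[d]}$. To act on the top face by a nondiagonal $F$ while keeping the bottom face asymptotically at $x^{[d]}$ you must pass to limits of $(F_n,\Delta(g_n)F_n)$ with $F_n x^{[d]}\to x^{[d]}$, i.e.\ to the Ellis group of the base system at $x^{[d]}$, and then show that its induced action on the fiber $\{\mathbf{v}:(x^{[d]},\mathbf{v})\in\overline{\mathcal{F}^{[d+1]}}(x^{[d+1]})\}$ still reaches $x^{[d]}$. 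That transitivity-on-the-fiber argument is the technical core of Shao--Ye's proof and is absent from your sketch; it would be needed both to establish your sub-claim and to complete the inductive step, so as it stands the proposal is a correct frame around a missing proof rather than a different route to the theorem.
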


\subsection{Regional proximality of higher order}\

\begin{definition}\label{def-rp}\cite{HKM}
Let $(X,G)$ be a t.d.s. and $d\in \mathbb{N}$.
   The \emph{regionally proximal relation of order $d$} is the relation $\textbf{RP}^{[d]}$
defined by: $(x,y)\in\textbf{RP}^{[d]}$ if
and only if for every $\delta>0$, there exist $x',y'\in X$ and $\vec{g}\in G^d$ such that:
$\rho(x,x')<\delta,\rho(y,y')<\delta$ and for every $ \epsilon\in \{0,1\}^d\backslash\{ \vec{0}\}$
\[
\rho(  (\vec{g}\cdot\epsilon) x', (\vec{g}\cdot\epsilon)  y')<\delta.
\]

\end{definition}

It turns out that $\mathbf{RP}^{[d]}$ is closely related to the dynamical cubespaces as the following results indicate.

\begin{theorem}\cite{SY12}\label{rp-equivalence}
Let $(X,G)$ be a minimal system and $d\in \mathbb{N}$.
Then the following statements are equivalent.
\begin{enumerate}
\item $(x,y)\in \mathbf{RP}^{[d]}$;
\item  $(x,y^{[d+1]}_*):=(x,\underbrace{y,y,\ldots,y}_{2^{d+1}-1\ \mathrm{times}})\in \mathbf{Q}^{[d+1]}(X)$;
\item $(x,y^{[d+1]}_*)\in \overline{\mathcal{F}^{[d+1]}}(x^{[d+1]})$.
\end{enumerate}
\end{theorem}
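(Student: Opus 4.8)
I would prove the three statements equivalent by running the cycle $(3)\Rightarrow(2)\Rightarrow(1)\Rightarrow(3)$. The implication $(3)\Rightarrow(2)$ is immediate from the definitions, since $\overline{\mathcal{F}^{[d+1]}}(x^{[d+1]})\subseteq \mathbf{Q}^{[d+1]}(X)$ (the orbit closure of one diagonal point sits inside the full cubespace). The implication $(2)\Rightarrow(1)$ is a direct unwinding of the definitions, while $(1)\Rightarrow(3)$ is the substantive part and is where \cref{cube-minimal} must be used. The guiding principle is the \emph{face decomposition} of a $(d+1)$-cube: writing $\eta=(\epsilon,j)\in\{0,1\}^{d+1}$ with $\epsilon\in\{0,1\}^d$ and $j\in\{0,1\}$, an element $S\in\mathcal{F}^{[d+1]}$ has the form $S_{(\epsilon,j)}=(\vec{g}\cdot\epsilon)\,h^{j}$ for some $\vec{g}\in G^d$ and $h\in G$, so that the lower face $(j=0)$ of $S z^{[d+1]}$ is the $d$-cube $((\vec{g}\cdot\epsilon)z)_{\epsilon}$ and the upper face $(j=1)$ is $((\vec{g}\cdot\epsilon)hz)_{\epsilon}$, i.e.\ the same $d$-cube moved by the diagonal element $h$.

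\textbf{The easy direction $(2)\Rightarrow(1)$.} Suppose $(x,y^{[d+1]}_*)\in\mathbf{Q}^{[d+1]}(X)$. Then for every $\delta>0$ there are a base point $z\in X$ and $\vec{g}\in G^d$, $h\in G$ with $S z^{[d+1]}$ within $\delta$ of $(x,y^{[d+1]}_*)$. Reading off the $\vec{0}$-component gives $\rho(z,x)<\delta$; the lower face gives $\rho((\vec{g}\cdot\epsilon)z,y)<\delta$ for $\epsilon\neq\vec{0}$; and the upper face gives $\rho((\vec{g}\cdot\epsilon)hz,y)<\delta$ for all $\epsilon$, in particular $\rho(hz,y)<\delta$. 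Setting $x'=z$ and $y'=hz$ one then has $\rho(x',x)<\delta$, $\rho(y',y)<\delta$, and for each $\epsilon\neq\vec{0}$ both $(\vec{g}\cdot\epsilon)x'$ and $(\vec{g}\cdot\epsilon)y'$ lie within $\delta$ of $y$, whence $\rho((\vec{g}\cdot\epsilon)x',(\vec{g}\cdot\epsilon)y')<2\delta$. As $\delta$ is arbitrary this is exactly the condition in \cref{def-rp}, so $(x,y)\in\mathbf{RP}^{[d]}$.

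\textbf{The hard direction $(1)\Rightarrow(3)$.} Assuming $(x,y)\in\mathbf{RP}^{[d]}$, I would first extract from \cref{def-rp} sequences $x_n\to x$, $y_n\to y$, and $\vec{g}_n\in G^d$ with $\rho((\vec{g}_n\cdot\epsilon)x_n,(\vec{g}_n\cdot\epsilon)y_n)\to 0$ for every $\epsilon\neq\vec{0}$. Letting $T_n\in\mathcal{F}^{[d]}$ be given by $(T_n)_\epsilon=\vec{g}_n\cdot\epsilon$ and passing to a subsequence, $T_nx_n^{[d]}\to\mathbf{A}$ and $T_ny_n^{[d]}\to\mathbf{B}$ with $\mathbf{A},\mathbf{B}\in\mathbf{Q}^{[d]}(X)$; by construction $\mathbf{A}_{\vec{0}}=x$, $\mathbf{B}_{\vec{0}}=y$, and $\mathbf{A}_\epsilon=\mathbf{B}_\epsilon$ for all $\epsilon\neq\vec{0}$. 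Thus $\mathbf{RP}^{[d]}$ produces two $d$-cubes that agree off the $\vec{0}$-vertex. The remaining task is to convert this ``two cubes agreeing off $\vec{0}$'' datum into the single standard point $(x,y^{[d+1]}_*)$, realized inside the one orbit closure $\overline{\mathcal{F}^{[d+1]}}(x^{[d+1]})$. Here I would invoke \cref{cube-minimal}: minimality of $(\mathbf{Q}^{[d+1]}(X),\mathcal{G}^{[d+1]})$ lets one transport an arbitrary cube onto a configuration whose $\vec{0}$-component is exactly $x$, and minimality of $(\overline{\mathcal{F}^{[d+1]}}(x^{[d+1]}),\mathcal{F}^{[d+1]})$ then pins the base point to $x$; the face decomposition above (upper face $=$ diagonal translate of lower face) is what lets the $\mathbf{A},\mathbf{B}$ data be assembled into an honest $(d+1)$-cube contracting to $(x,y^{[d+1]}_*)$.

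\textbf{Main obstacle.} The crux is precisely this alignment step, and naive continuity will not carry it out: although $x_n\to x$, the group elements $\vec{g}_n\cdot\epsilon$ may expand arbitrarily, so one has no control over the points $(\vec{g}_n\cdot\epsilon)x_n$ individually and cannot simply build a cube over the fixed base point $x$ matching $(x,y^{[d+1]}_*)$. Equivalently, the gap between $(2)$ (membership in $\mathbf{Q}^{[d+1]}(X)$, where the base point is free) and $(3)$ (membership in $\overline{\mathcal{F}^{[d+1]}}(x^{[d+1]})$, where the base point is forced to be $x$) is exactly the difficulty, and it is resolved only by replacing continuity arguments with the density of orbits in the minimal cube systems supplied by \cref{cube-minimal}. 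I expect the careful verification that the transported configuration lands on the prescribed point $(x,y^{[d+1]}_*)$ — rather than merely on some point of the fiber over $x$ — to be the most delicate bookkeeping in the proof.
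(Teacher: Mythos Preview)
The paper does not prove this theorem: it is quoted verbatim from \cite{SY12} and used as a black box (the only argument nearby is the short deduction of \cref{key} from it). So there is no ``paper's own proof'' to compare your proposal against.

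On the substance of your sketch: the implications $(3)\Rightarrow(2)$ and $(2)\Rightarrow(1)$ are fine and essentially as you wrote them. For $(1)\Rightarrow(3)$, however, you have correctly located the difficulty but not dissolved it. Producing two $d$-cubes $\mathbf{A},\mathbf{B}\in\mathbf{Q}^{[d]}(X)$ that agree off the $\vec{0}$-vertex is the easy output of \cref{def-rp}; the real work in \cite{SY12} is a structural result about $\mathbf{Q}^{[d]}$ (a ``gluing/closing'' property for cubes sharing a face, proved by induction on $d$ together with an analysis of the $\mathcal{F}^{[d]}$-minimal sets) that lets one collapse such a pair to the specific configuration $(x,y_*^{[d+1]})$. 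Invoking \cref{cube-minimal} alone does not give this: minimality lets you move cubes around, but it does not by itself force the transported configuration to land on $(x,y,\ldots,y)$ rather than on some other point of the $\mathcal{F}^{[d+1]}$-orbit closure with $\vec{0}$-coordinate $x$. Your final paragraph names exactly this gap; what is missing is the mechanism (the face-gluing lemma and the unique-closing/vertical-completion argument of \cite{SY12}) that bridges it.
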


From \cref{rp-equivalence},
we can easily get the following corollary.

\begin{cor}\label{key}
Let $(X,G)$ be a minimal system, $d\in \mathbb{N}$ and $(x,y)\in \mathbf{RP}^{[d]}$.
Then $(x^{[d]},y,x^{[d]}_*)\in \overline{\mathcal{F}^{[d+1]}}(x^{[d+1]})$.
That is,
let $\xi\in \{0,1\}^{d+1}$ such that $\xi(1)=\cdots=\xi(d)=0,\xi(d+1)=1$,
and let $\mathbf{x}=(x_\epsilon:\epsilon\in \{0,1\}^{d+1})\in X^{[d+1]}$
such that $x_{\xi}=y$ and $x_\epsilon=x$ for $\epsilon\in\{0,1\}^{d+1}\backslash\{\xi\}$.
Then $\mathbf{x}\in \overline{\mathcal{F}^{[d+1]}}(x^{[d+1]})$.
\end{cor}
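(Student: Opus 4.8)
The plan is to deduce the corollary directly from \cref{rp-equivalence}, by transporting the already-known cube point $(x,y^{[d+1]}_*)$ \emph{inside} the set $\overline{\mathcal{F}^{[d+1]}}(x^{[d+1]})$ to the desired point $\mathbf{x}$, using only face elements that are trivial in the last coordinate. First I would record two standard properties of the relation, both immediate from \cref{def-rp}: it is symmetric (exchange $(x',y')$), and it is monotone, $\mathbf{RP}^{[d]}\subseteq\mathbf{RP}^{[d-1]}$ (restrict to those $\epsilon$ with $\epsilon(d)=0$). Consequently, from $(x,y)\in\mathbf{RP}^{[d]}$ we also obtain $(y,x)\in\mathbf{RP}^{[d-1]}$.

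The starting point is \cref{rp-equivalence} applied to $(x,y)$, giving $(x,y^{[d+1]}_*)\in \overline{\mathcal{F}^{[d+1]}}(x^{[d+1]})$. Since this set is closed and $\mathcal{F}^{[d+1]}$-invariant, it suffices to exhibit a sequence in $\mathcal{F}^{[d+1]}$ carrying $(x,y^{[d+1]}_*)$ to $\mathbf{x}$ in the limit. To organize the computation I would split each $\epsilon\in\{0,1\}^{d+1}$ as $\epsilon=(\epsilon',\epsilon(d+1))$ with $\epsilon'\in\{0,1\}^d$, so that $X^{[d+1]}$ is the product of a ``bottom'' face ($\epsilon(d+1)=0$) and a ``top'' face ($\epsilon(d+1)=1$). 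A direct check shows that an element $S\in\mathcal{F}^{[d+1]}$ with parameters $(k_1,\dots,k_d)$ on the first $d$ coordinates and parameter $g$ on the last one acts as $(\mathbf{a},\mathbf{c})\mapsto (T\mathbf{a},\,gT\mathbf{c})$, where $T\in\mathcal{F}^{[d]}$ has parameters $(k_1,\dots,k_d)$ and $g$ acts diagonally. In this picture $(x,y^{[d+1]}_*)$ has bottom face $(x,y^{[d]}_*)$ and top face $y^{[d]}$, while the target $\mathbf{x}$ has bottom face $x^{[d]}$ and top face $(y,x^{[d]}_*)$.

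The key observation, and the step I would single out as the crux, is that it is enough to take $g=e$ and let only the $\mathcal{F}^{[d]}$-part vary. By \cref{rp-equivalence} applied to $(y,x)\in\mathbf{RP}^{[d-1]}$ (for $d=1$ this is just minimality of $(X,G)$, where $\overline{\mathcal{F}^{[1]}}(y^{[1]})=\{y\}\times X$), we have $(y,x^{[d]}_*)\in\overline{\mathcal{F}^{[d]}}(y^{[d]})$, so there exist $T_n\in\mathcal{F}^{[d]}$ with parameters $\vec k^{(n)}\in G^d$ such that $T_ny^{[d]}\to (y,x^{[d]}_*)$; equivalently $(\vec k^{(n)}\cdot\epsilon')y\to x$ for every $\epsilon'\in\{0,1\}^d\setminus\{\vec 0\}$. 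Let $S_n\in\mathcal{F}^{[d+1]}$ be the element whose first $d$ parameters are those of $T_n$ and whose last parameter is $e$. Then the top face of $S_n(x,y^{[d+1]}_*)$ is exactly $T_ny^{[d]}\to (y,x^{[d]}_*)$, while its bottom face is $T_n(x,y^{[d]}_*)$, whose $\vec 0$-entry remains $x$ and whose $\epsilon'$-entry ($\epsilon'\neq\vec 0$) equals $(\vec k^{(n)}\cdot\epsilon')y\to x$, so the bottom face tends to $x^{[d]}$. Hence $S_n(x,y^{[d+1]}_*)\to\mathbf{x}$, placing $\mathbf{x}$ in $\overline{\mathcal{F}^{[d+1]}}(x^{[d+1]})$, as required.

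I expect the only real subtlety to lie in the bottom/top bookkeeping: one must verify that the single convergence condition $(\vec k^{(n)}\cdot\epsilon')y\to x$ simultaneously forces the top face to the desired non-diagonal configuration and collapses the bottom face back to the diagonal. This works precisely because the $\vec 0$-parameter of any $T_n$ is the identity, so with $g=e$ the bottom $\vec 0$-entry is pinned at $x$ and the top $\vec 0$-entry is pinned at $y$, while all remaining entries are governed by the same limit. The conceptual point to get right is that the relevant hypothesis is the \emph{lower-order} relation $(y,x)\in\mathbf{RP}^{[d-1]}$, which monotonicity $\mathbf{RP}^{[d]}\subseteq\mathbf{RP}^{[d-1]}$ supplies, and I would take care to treat the base case $d=1$ separately via minimality (equivalently via the convention $\mathbf{RP}^{[0]}=X\times X$).
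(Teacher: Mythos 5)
Your proposal is correct, and while the final construction coincides with the paper's, the justification of the key step is genuinely different. Both arguments reduce the corollary to producing a sequence $\vec{g}_n\in G^d$ with $(\vec{g}_n\cdot\epsilon')y\to x$ for every $\epsilon'\in\{0,1\}^d\setminus\{\vec{0}\}$, and then lifting it to $\mathcal{F}^{[d+1]}$ by appending the identity in the last coordinate: your $T_n\mapsto S_n$ is exactly the paper's map $\sigma\colon(g_1,\ldots,g_d)\mapsto(g_1,\ldots,g_d,e)$, and your bottom/top face bookkeeping (the $\vec{0}$-entries pinned at $x$ and $y$, all other entries governed by the single limit) matches its verification that $\lim_n T_n(x,y^{[d+1]}_*)=\mathbf{x}$. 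Where you diverge is in how the sequence is obtained. The paper uses monotonicity to get $(x,y)\in\mathbf{RP}^{[d-1]}$, deduces $(x,y^{[d]}_*)\in\overline{\mathcal{F}^{[d]}}(x^{[d]})$ from \cref{rp-equivalence}, and then invokes \cref{cube-minimal}(2), the minimality of $(\overline{\mathcal{F}^{[d]}}(x^{[d]}),\mathcal{F}^{[d]})$, to flow $(x,y^{[d]}_*)$ back to the diagonal point $x^{[d]}$. You instead combine monotonicity with the (immediate) symmetry of \cref{def-rp} to get $(y,x)\in\mathbf{RP}^{[d-1]}$ and apply \cref{rp-equivalence}(3) based at $y$, obtaining $(y,x^{[d]}_*)\in\overline{\mathcal{F}^{[d]}}(y^{[d]})$ directly; this yields the same convergence condition while bypassing \cref{cube-minimal} entirely, at the modest cost of the symmetry observation and an explicit base case $d=1$, where $\mathbf{RP}^{[0]}$ is undefined and you correctly fall back on minimality of $(X,G)$ via $\overline{\mathcal{F}^{[1]}}(y^{[1]})=\{y\}\times X$. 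It is worth noting that the paper's proof silently has the same $d=1$ edge case (it also applies \cref{rp-equivalence} at order $d-1$), so your explicit treatment is a small improvement; conversely, the paper's minimality argument is the more robust mechanism, since it returns an arbitrary point of the minimal set to the diagonal without relying on any symmetry of the relation.
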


\begin{proof}
As $(x,y)\in \mathbf{RP}^{[d]}\subseteq \mathbf{RP}^{[d-1]}$,
by \cref{rp-equivalence} we get $(x,y^{[d+1]}_*)\in \overline{\mathcal{F}^{[d+1]}}(x^{[d+1]})$
and $(x,y^{[d]}_*)\in \overline{\mathcal{F}^{[d]}}(x^{[d]})$.
It follows from \cref{cube-minimal} (2) that
$(\overline{\mathcal{F}^{[d]}}(x^{[d]}),\mathcal{F}^{[d]})$ is minimal, and thus there exists a sequence $\{\vec{g}_n\}_{n\in \mathbb{N}}\subseteq G^d$
such that $S_n=(\vec{g}_n\cdot \epsilon:\epsilon\in \{0,1\}^d)$
and $\lim\limits_{n\to\infty}S_n(x,y^{[d]}_*)=x^{[d]}$.

Let $\sigma$ be the map from $G^d$ to $G^{d+1}$
given by
\[
\vec{g}=(g_1,\ldots,g_d)\mapsto \sigma(\vec{g})=(g_1,\ldots,g_d,e).
\]
Let $T_n=(\sigma(\vec{g}_n)\cdot \epsilon:\epsilon\in \{0,1\}^{d+1})$ for $n\in \mathbb{N}$.
 Then we have $T_n\in \mathcal{F}^{[d+1]}$
and $\lim\limits_{n\to\infty} T_n(x,y^{[d+1]}_*)=\mathbf{x}$,
    which implies $\mathbf{x}\in \overline{\mathcal{F}^{[d+1]}}(x^{[d+1]})$.
\end{proof}

\section{Proof of \cref{main-thm}}

In this section, we present a proof of our main result. The proof concerning sufficiency is relatively straightforward; however, the proof of necessity is considerably more complex.  
To clarify the concepts involved in the proof of necessity, we will first address the case when $d=2$ and subsequently extend our discussion to encompass the general case.

\begin{proof}[Proof of \cref{main-thm}]
Assume first that there exists a sequence
$\{\vec{g}_n\}_{n\in \mathbb{N}}\subseteq G^d$ and
 $z_{\epsilon}\in X,\epsilon\in \{0,1\}^d$
with $z_{\vec{0}}=y$
such that for every  $\epsilon\in \{0,1\}^d\backslash\{ \vec{0}\}$
\[
\lim_{n\to \infty}(\vec{g}_n\cdot\epsilon)x= z_\epsilon\ \mathrm{and} \
\lim_{n\to \infty}(\vec{g}_n\cdot\epsilon)^{-1}z_{\vec{1}}= z_{\vec{1}-\epsilon}.
\]

We are going to show  $(x,y)\in \mathbf{RP}^{[d]}$. Fix $\delta>0$. Choose $n\in \mathbb{N}$ such that
 for every  $\epsilon\in \{0,1\}^d\backslash\{ \vec{0}\}$
\begin{equation}\label{r1}
\rho((\vec{g}_n\cdot\epsilon)x,z_\epsilon)<\delta\ 
\text{ and}
\
\rho((\vec{g}_n\cdot\epsilon)^{-1}z_{\vec{1}},z_{\vec{1}-\epsilon})<\delta,
\end{equation}
and thus
 for every  $\epsilon\in \{0,1\}^d\backslash\{ \vec{1}\}$
\begin{equation}\label{r2}
\rho((\vec{g}_n\cdot(\vec{1}-\epsilon))^{-1}z_{\vec{1}},z_\epsilon)<\delta.
\end{equation}

Taking $x'=x$ and $y'=(\vec{g}_n\cdot\vec{1})^{-1}z_{\vec{1}}$,
then we have
\begin{align*}
&\rho(x,x')=0, \;\;\rho(y,y')=\rho(z_{\vec{0}},(\vec{g}_n\cdot\vec{1})^{-1}z_{\vec{1}})\overset{\eqref{r1}}{<}\delta,\\
&\rho((\vec{g}_n\cdot\vec{1})x',(\vec{g}_n\cdot\vec{1})y')=
\rho((\vec{g}_n\cdot\vec{1})x,z_{\vec{1}})\overset{\eqref{r1}}{<}\delta,
\end{align*}
and for $\epsilon\in \{0,1\}^d \backslash\{ \vec{0},\vec{1}\}$
\begin{align*}
\rho((\vec{g}_n\cdot\epsilon)x',(\vec{g}_n\cdot\epsilon)y')
&\leqslant  \rho((\vec{g}_n\cdot\epsilon)x,z_\epsilon)+\rho(z_\epsilon,(\vec{g}_n\cdot\epsilon)(\vec{g}_n\cdot\vec{1})^{-1}z_{\vec{1}})\\
&\overset{\eqref{r1}}{<} \delta+\rho(z_\epsilon,(\vec{g}_n\cdot(\vec{1}-\epsilon))^{-1}z_{\vec{1}})\\
&\overset{\eqref{r2}}{<}2\delta,
\end{align*}
which implies $(x,y)\in \mathbf{RP}^{[d]}$ as $\delta$ is arbitrary.

\medskip

We next show the converse.
To make the idea of the proof clearer, we first show the case when $d=2$ and the
general case follows by the same idea.

For $l\in\mathbb{N}$, $(x_1,\ldots,x_l),(y_1,\ldots,y_l)\in X^l$ and $\delta>0$, we write
\[
(x_1,\ldots,x_l)\simeq_{\delta}(y_1,\ldots,y_l)
\]
if $\rho(x_i,y_i)<\delta$ for $1\leqslant i\leqslant l$.
For $\vec{g}=(g_1,\ldots,g_l),\vec{h}=(h_1,\ldots,h_l)\in G^l$, we define
\[
\vec{g}\cdot \vec{h}=(g_1h_1,\ldots,g_lh_l).
\]

\medskip

\noindent {\bf The case $d=2$}.

Let $(x,y)\in \mathbf{RP}^{[2]}$.
Then
\begin{equation}\label{d=2sim}
(x,x,x,x,y,x,x,x)\in \overline{\mathcal{F}^{[3]}}(x^{[3]})
\end{equation}
 by  \cref{key}.

For every finite set $S\subseteq G$ and $\delta>0$, by \eqref{d=2sim} and the uniform continuity of every transformation in $S$
there exist $a,b,c\in G$ such that
\[
(a,b,ab,c,ca,cb,cab)x^7\simeq_{\delta}(x,x,x,y,x,x,x),
\]
where $x^m=(x,\ldots,x)$ ($m$ times), and at the same time
for all $\vec{s}\in S^7$,
\[
\vec{s}  (a,b,ab,c,ca,cb,cab)x^7\simeq_{\delta}    \vec{s} (x,x,x,y,x,x,x).
\]

Let $\{\delta_n\}_{n\in \mathbb{N}}$ be a decreasing sequence of positive real numbers
with $\sum_{n=1}^{\infty}\delta_n<\infty$.
  We first define sequences $\{a_n\}_{n\in \mathbb{N}},\{b_n\}_{n\in \mathbb{N}},\{c_n\}_{n\in \mathbb{N}}\subseteq G$ inductively.

  Choose $a_1,b_1,c_1\in G$ such that
  \[
  (a_1,b_1,a_1b_1,c_1,c_1a_1,c_1b_1,c_1a_1b_1)x^7\simeq_{\delta_1}(x,x,x,y,x,x,x).
  \]
  Now assume that $n\geqslant 2$ and we have already chosen $a_1,\ldots,a_{n-1}$, $b_1,\ldots,b_{n-1}$, $c_1,\ldots,c_{n-1}$.
  Let
  \begin{equation*}
      S_n=\{g\in G:g=\prod_{i=1}^{n-1}a_i^{u_i}\prod_{i=1}^{n-1}b_i^{v_i}\prod_{i=1}^{n-1}c_i^{w_i},u_i,v_i,w_i\in \{0,1\}
  ,i=1,\ldots,n-1\}.
  \end{equation*}
    As $S_n$ is finite, choose $a_{n},b_{n},b_{n}\in G$ such that for all $\vec{s}\in S_n^7$
    \begin{equation}\label{construction}
 \vec{s} (a_{n},b_{n},a_{n}b_{n},c_n,c_{n}a_{n},c_{n}b_{n},c_{n}a_{n}b_{n})x^7
  \simeq_{\delta_{n}}  \vec{s}(x,x,x,y,x,x,x).
  \end{equation}
  This means that for each $s\in S_n$ we have $\rho(sc_nx,sy)<\delta_n$ and
  $$\rho(stx,sx)<\delta_n, \;\forall\; t\in\{a_{n},b_{n},a_{n}b_{n},c_{n}a_{n},c_{n}b_{n},c_{n}a_{n}b_{n}\}.$$
This finishes the inductive definition.

 \medskip

 It is clear that $S_n\subseteq S_{n+1}$, and $a_{n},b_{n},a_{n}b_{n},c_n,c_{n}a_{n},c_{n}b_{n},c_{n}a_{n}b_{n}\in S_{n+1}$ for any $n\in\mathbb{N}$. Moreover, we have $\lim\limits_{n\to\infty}c_nx= y$ since $e\in S_n$.

  Let
  \[
  A_n=\prod_{k=1}^{n}a_{2k-1}b_{2k-1}c_{2k-1}\in S_{2n},\quad B_n=\prod_{k=1}^{n}a_{2k}b_{2k}c_{2k}\in S_{2n+1},
  \]
  and let
 \[
  \alpha_n=a_{2n+1}a_{2n+2}A_n\in S_{2n+3},\quad\beta_n=b_{2n+1}B_n\in S_{2n+2}.
    \]

\medskip

\noindent {\bf Claim 1}:
$\{ \alpha_nx\}_{n\in \mathbb{N}}$ is a Cauchy sequence in $X$.

\begin{proof}[Proof of Claim 1]
For every $n\in \mathbb{N}$, as $a_{2n+1}A_n\in S_{2n+2}$ and $A_n\in S_{2n}\subseteq S_{2n+1}$, we get
 \begin{equation}\label{clm1}
\begin{aligned}
  \rho(\alpha_nx,A_nx)& =  \rho(a_{2n+1}a_{2n+2}A_nx, A_{n}x)\\
    & \leqslant  \rho((a_{2n+1}A_n)a_{2n+2}x,(a_{2n+1}A_n)x)+ \rho(A_na_{2n+1}x,A_nx)\\
  &\overset{\eqref{construction}}{<} \delta_{2n+2}+\delta_{2n+1}<2 \delta_{2n+1},
\end{aligned}
\end{equation}
  and
\begin{align*}
  \rho(\alpha_{n+1}x,A_nx)& \leqslant   \rho(\alpha_{n+1}x,A_{n+1}x)+\rho(A_{n+1}x,A_nx)\\
     &\overset{\eqref{clm1}}{<}2\delta_{2n+3}+ \rho(A_n(a_{2n+1}b_{2n+1}c_{2n+1})x,A_nx)\\
  &\overset{\eqref{construction}}{<}2 \delta_{2n+3}+\delta_{2n+1}<3 \delta_{2n+1}.
  \end{align*}
  
It follows that
\[
  \rho(\alpha_nx,\alpha_{n+1}x) \leqslant  \rho(\alpha_n x, A_{n}x)+ \rho(A_n x,\alpha_{n+1}x)<5\delta_{2n+1}.
\]

 Then for any $m\geqslant n+2$,
  \begin{align*}
     \rho(\alpha_nx,\alpha_{m+1}x) & \leqslant  \rho(\alpha_nx,\alpha_{n+1}x)+\cdots+ \rho(\alpha_mx,\alpha_{m+1}x) \\
      & < 5\sum_{k=n}^{m}\delta_{2k+1},
  \end{align*}
  which implies that $\{ \alpha_nx\}_{n\in \mathbb{N}}$ is a Cauchy sequence in $X$.
\end{proof}

Similarly,
$\{ \beta_nx\}_{n\in \mathbb{N}},\{\alpha_n \beta_nx\}_{n\in \mathbb{N}}$ are also Cauchy sequences in $X$.
Let $z,z_1,z_2\in X$ such that
  $ \lim\limits_{n\to\infty}\alpha_n x= z_1,\lim\limits_{n\to\infty}\beta_n x= z_2
$ and $\lim\limits_{n\to\infty}\alpha_n\beta_n x= z$.

\medskip

\noindent {\bf Claim 2}:
 $ \lim\limits_{n\to \infty}\lim\limits_{m\to \infty}
     \rho(\alpha_n^{-1}\alpha_m\beta_mx,\beta_nx)=0$.

 \begin{proof}[Proof of Claim 2]
    For $m\geqslant n+2$, we have $A_n^{-1}A_m B_n^{-1}B_m=\prod_{k=2n+1}^{2m}(a_kb_kc_k)$. Thus,
  \begin{align*}
    \alpha_n^{-1}\alpha_m\beta_m &=(a_{2n+1}a_{2n+2}A_n)^{-1}a_{2m+1}a_{2m+2}A_m b_{2m+1}B_m\\
  &=a_{2m+2}\cdot D_1\cdot a_{2n+1}^{-1}a_{2n+2}^{-1} (A_n^{-1}A_m B_n^{-1}B_m)\cdot B_n\\
  &=a_{2m+2}\cdot  D_1\cdot D_2\cdot D_3\cdot D_4\cdot B_n,
   \end{align*}
where
\[
D_1=a_{2m+1}b_{2m+1},\;
D_2=\prod_{k=2n+3}^{2m}(a_kb_kc_k),\;
D_3=b_{2n+2}c_{2n+2},\;
D_4=b_{2n+1}c_{2n+1}.
\]

Then
  \begin{align*}
         \rho(\alpha_n^{-1}\alpha_m\beta_mx,B_nx) & =
\rho( a_{2m+2}  D_1D_2D_3D_4 B_nx,B_nx) \\
     & \leqslant \rho(D_1D_2D_3D_4  B_n a_{2m+2}x,  D_1D_2D_3D_4 B_nx)\\
&\quad+\sum_{j=1,2,3} \rho( B_nD_4\cdots D_{j+1} D_j x, B_n D_4\cdots D_{j+1} x)+\rho(B_n D_4 x,  B_nx)\\
&< \delta_{2m+2}+\delta_{2m+1}+\sum_{k=2n+3}^{2m}\delta_k+ \delta_{2n+2}+\delta_{2n+1}\\
&=  \sum_{k=2n+1}^{2m+2}\delta_k,
    \end{align*}
where the term $\sum_{k=2n+3}^{2m}\delta_k$ comes from the estimation of $\rho(B_nD_4D_3D_2x, B_nD_4D_3x)$
by using \eqref{construction} repeatedly. That is, we do it as follows: put $C_n=B_nD_4D_3$ then
 \begin{align*}
 \rho(C_nD_2x, C_nx)&=\rho(C_n \prod_{k=2n+3}^{2m}(a_kb_kc_k) x, C_nx)\\
 &\leqslant \rho(C_n \prod_{k=2n+3}^{2m-1}(a_kb_kc_k)\ a_{2m}b_{2m}c_{2m} x, C_n \prod_{k=2n+3}^{2m-1}(a_kb_kc_k)x)\\
 &\quad \quad \quad \quad \quad \quad \quad +\rho(C_n \prod_{k=2n+3}^{2m-1}(a_kb_kc_k) x, C_nx)\\
 & \overset{\eqref{construction}}{<} \delta_{2m}+ \rho(C_n \prod_{k=2n+3}^{2m-1}(a_kb_kc_k) x, C_nx) \ (\text{as}\  C_n \prod_{k=2n+3}^{2m-1}(a_kb_kc_k)\in S_{2m})\\
 &<\cdots < \sum_{k=2n+3}^{2m}\delta_k.
 \end{align*}

Thus, we have
 \begin{align*}
         \rho(\alpha_n^{-1}\alpha_m\beta_mx,\beta_nx) & \leqslant  \rho(\alpha_n^{-1}\alpha_m\beta_mx,B_nx)
         +\rho(B_nx,b_{2n+1}B_nx)\\
     &<\sum_{k=2n+1}^{2m+2}\delta_k+\delta_{2n+1},
    \end{align*}
 which implies
    $ \lim\limits_{n\to \infty}\lim\limits_{m\to \infty}
     \rho(\alpha_n^{-1}\alpha_m\beta_mx,\beta_nx)=0$.
 \end{proof}

By Claim 2, we get  $\lim\limits_{n\to\infty}\alpha^{-1}_nz= z_2$.
Similarly, we have
 $ \lim\limits_{n\to \infty}\lim\limits_{m\to \infty}
     \rho(\beta_n^{-1}\alpha_m\beta_mx,\alpha_nx)=0$
     and thus $\lim\limits_{n\to\infty}\beta^{-1}_nz= z_1$.

\medskip

\noindent {\bf Claim 3}:
 $ \lim\limits_{n\to \infty}\lim\limits_{m\to \infty}\rho(\alpha_n^{-1}\beta_n^{-1}\alpha_m\beta_mx,c_{2n+1}x)=0$.

\begin{proof}[Proof of Claim 3]
  For $m\geqslant n+2$, by the similar discussion as in the proof of Claim 2 we have
    \[
 \alpha_n^{-1}\beta_n^{-1}\alpha_m\beta_m=   a_{2m+2}\cdot D_1\cdot D_2\cdot D_3\cdot c_{2n+1},
    \]
where
\[
D_1=a_{2m+1}b_{2m+1},\quad
D_2=\prod_{k=2n+3}^{2m}(a_kb_kc_k),\quad
D_3=b_{2n+2}c_{2n+2}.
\]

Then
    \begin{align*}
   &\rho(\alpha_n^{-1}\beta_n^{-1}\alpha_m\beta_mx,c_{2n+1}x)   \\
  = \;&\rho( a_{2m+2}D_1D_2D_3c_{2n+1}x,c_{2n+1}x  )\\
\leqslant\; &\rho( a_{2m+2}D_1D_2D_3c_{2n+1}x,D_1D_2D_3c_{2n+1}x  )+\rho( D_1D_2D_3c_{2n+1}x,D_2D_3c_{2n+1}x  )\\
&\quad\quad\quad \quad +\rho(D_2D_3c_{2n+1}x,D_3c_{2n+1}x  )+\rho(D_3c_{2n+1}x,c_{2n+1}x  )\\
 \overset{\eqref{construction}}{<} \;&\sum_{k=2n+2}^{2m+2}\delta_k,
\end{align*}
which implies
     $ \lim\limits_{n\to \infty}\lim\limits_{m\to \infty}\rho(\alpha_n^{-1}\beta_n^{-1}\alpha_m\beta_mx,c_{2n+1}x)=0$.
\end{proof}

Recall that 
$\lim\limits_{n\to\infty}c_nx= y$.
Thus by Claim 3,
$\lim\limits_{n\to\infty}\alpha^{-1}_n\beta^{-1}_nz= y$.
Taking $z_{(0,0)}=y,z_{(1,0)}=z_1,z_{(0,1)}=z_2,z_{(1,1)}=z$ and $\vec{g}_n=(\alpha_n,\beta_n)$,
we get
  \[
 \lim_{n\to \infty} (\vec{g}_n\cdot\epsilon)x=z_\epsilon\quad \mathrm{and} \quad
   \lim_{n\to \infty} (\vec{g}_n\cdot\epsilon)^{-1}z_{\vec{1}}= z_{\vec{1}-\epsilon},
  \]
for all $\epsilon\in \{0,1\}^2\backslash \{(0,0)\}$.

\bigskip

\noindent {\bf The general case}.

Now we fix $d\in\mathbb{N}$  and assume that $(x,y)\in \mathbf{RP}^{[d]}$.

Let $\xi\in \{0,1\}^{d+1}$ such that $\xi(1)=\cdots=\xi(d)=0$ and $\xi(d+1)=1$.
Let $\mathbf{x}=(x_\epsilon:\epsilon\in \{0,1\}^{d+1})\in X^{[d+1]}$
such that $x_{\xi}=y$ and $x_\epsilon=x$ for $\epsilon\in \{0,1\}^{d+1}\backslash\{\xi\}$.
Then we have $\mathbf{x}\in \overline{\mathcal{F}^{[d+1]}}(x^{[d+1]})$ by \cref{key}.

Let $\{\delta_n\}_{n\in \mathbb{N}}$ be a decreasing sequence of positive numbers
 with $\sum_{n=1}^{\infty}\delta_n<\infty$. Since $\mathbf{x}\in \overline{\mathcal{F}^{[d+1]}}(x^{[d+1]})$,
for every $n\in\mathbb{N}$, 
there is some $\vec{g}\in G^{d+1}$ such that for all $\epsilon\in \{0,1\}^{d+1}$,
\[
\rho((\vec{g}\cdot \epsilon)x,x_\epsilon)<\delta_n.
\]
Moreover, if $H\subseteq G$ is finite, by the uniform continuity of every transformation in $H$
we can assume that  for all $h\in H$ and all $\epsilon\in \{0,1\}^{d+1}$,
\[
\rho(h(\vec{g}\cdot \epsilon)x,hx_\epsilon)<\delta_n.
\]

  We first define a sequence $\{\vec{a}_n\}_{n\in \mathbb{N}}\subseteq G^{d+1}$ inductively.

  Choose $\vec{a}_1\in G^{d+1}$ such that
  for all $\epsilon\in \{0,1\}^{d+1}$,
  \begin{equation*}
    \rho((\vec{a}_1\cdot \epsilon)x,x_\epsilon)<\delta_1.
  \end{equation*}
  Now assume that $n\geqslant 2$ and we have already chosen
  $\vec{a}_1,\ldots,\vec{a}_{n-1}\in G^{d+1}$.
  Let
  \[
H_{n}=\{h\in G:h=\prod_{i=1}^{n-1}(\vec{a}_i\cdot \epsilon_i),\epsilon_i\in \{0,1\}^{d+1}
  ,i=1,\ldots,n-1\}.
  \]
  As $H_{n}$ is finite, we can choose $\vec{a}_n\in G^{d+1}$ such that
  \begin{equation}\label{general}
    \rho(h(\vec{a}_n\cdot \epsilon)x,hx_\epsilon)<\delta_n,
  \end{equation}
  for all $h\in H_{n}$ and all $\epsilon\in \{0,1\}^{d+1}$.

This finishes the inductive definition.


\medskip

\noindent {\bf Claim 4}:
Let $i_1<\cdots<i_k$ be positive integers and $\epsilon_{i_1},\ldots,\epsilon_{i_k}\in \{0,1\}^{d+1}\backslash\{\xi\}$. 
Then for any $b\in H_{i_1}$ we have 
\[
\rho\big(b\prod_{j=1}^{k}(\vec{a}_{i_j}\cdot{\epsilon_{i_j}})x,bx\big)< k \delta_{i_1}.
\]

\begin{proof}[Proof of Claim 4]
By the construction of every $\vec{a}_{n}$,
we have
\begin{align*}
   & \rho\big(b\prod_{j=1}^{k}(\vec{a}_{i_j}\cdot{\epsilon_{i_j}})x,bx\big) \\
\leqslant\; & \rho\big(b\prod_{j=1}^{k-1}(\vec{a}_{i_j}\cdot{\epsilon_{i_j}} ) (\vec{a}_{i_k}\cdot{\epsilon_{i_k}} ) x,b\prod_{j=1}^{k-1}(\vec{a}_{i_j}\cdot{\epsilon_{i_j}})x\big)+
\cdots+
\rho(b(\vec{a}_{i_1}\cdot{\epsilon_{i_1}})x,bx)\\
\overset{\eqref{general}}{<}\;&\sum_{j=1}^{k}\delta_{i_j}\leqslant k\delta_{i_1},
\end{align*}
as was to be shown.
\end{proof}

  For $n\in \mathbb{N}$, let $\vec{a}_n=(a^{(n)}_1,\ldots,a^{(n)}_{d+1})$.
  For $1\leqslant j\leqslant d$, let
  \[
  A_j^{(n)}=\prod_{i=1}^{n}(\vec{a}_{di-j+1}\cdot \vec{1}),
  \quad
    g_j^{(n)}= A^{(n)}_j\prod_{k=1}^{j}
 a_j^{(dn+k)},
  \]
  and let
\[\vec{g}_n=( g_1^{(n)},\ldots,  g_d^{(n)}).
  \]

\medskip

For $1\leqslant k\leqslant d$ and $\epsilon=(\epsilon(1),\ldots,\epsilon(d))\in \{0,1\}^d$,
let $\theta_k=\theta_k(\epsilon),\beta_k=\beta_k(\epsilon)\in\{0,1\}^{d+1}$ such that
\begin{equation}\label{def-omega}
\theta_k(\epsilon)=(0,\ldots,0,\epsilon(k),\ldots,\epsilon(d),0),
\end{equation}
and
\begin{equation}\label{def-epsilon-tilde}
\beta_k(\epsilon)=(\epsilon(k),\ldots, \epsilon(k)).
\end{equation}

\medskip

For $\epsilon=(\epsilon(1),\ldots,\epsilon(d))\in \{0,1\}^d$,
we have
\begin{flalign}\label{g}
\begin{split}
\vec{g}_n\cdot \epsilon &= \prod_{j=1}^{d}\prod_{k=1}^{j}(a_j^{(dn+k)})^{\epsilon(j)}(A^{(n)}_j)^{\epsilon(j)} \\
&=
\prod_{k=1}^{d}\prod_{j=k}^{d}(a_j^{(dn+k)})^{\epsilon(j)}\cdot
\prod_{j=1}^{d}(A^{(n)}_j)^{\epsilon(j)} \\
&=\prod_{k=1}^{d}(\vec{a}_{dn+k}\cdot \theta_k(\epsilon))
\cdot \prod_{i=1}^n\prod_{j=1}^{d}(\vec{a}_{di-j+1}\cdot \beta_j(\epsilon)).
  \end{split}
\end{flalign}

\medskip

\noindent {\bf Claim 5}:
$\{ (\vec{g}_n\cdot \epsilon)x\}_{n\in \mathbb{N}}$
is a Cauchy sequence in $X$
for every $\epsilon\in \{0,1\}^d\backslash\{\vec{0}\}$.

\begin{proof}[Proof of Claim 5]
Fix $\epsilon=(\epsilon(1),\ldots,\epsilon(d))\in \{0,1\}^d\backslash\{\vec{0}\}$. 
For brevity, let us denote $\theta$ as $\theta(\epsilon)$ and $\beta$ as $\beta(\epsilon)$.
By \eqref{g} we have
\[
\vec{g}_n\cdot \epsilon =\prod_{k=1}^{d}(\vec{a}_{dn+k}\cdot \theta_k)
\cdot \prod_{i=1}^n\prod_{j=1}^{d}(\vec{a}_{di-j+1}\cdot \beta_j).
\]

Let $A_n=\prod_{i=1}^n\prod_{j=1}^{d}(\vec{a}_{di-j+1}\cdot \beta_j)$ for $n\in\mathbb{N}$. Then we have
$A_n\in H_{dn+1}$ and
\[
A_{n+1}=A_n\cdot \prod_{j=1}^{d}(\vec{a}_{d(n+1)-j+1}\cdot \beta_j).
\]

Recall that 
\begin{align*}
\theta_k&= \theta_k(\epsilon)=(0,\ldots,0,\epsilon(k),\ldots,\epsilon(d),0), \\
 \beta_k &= \beta_k(\epsilon)=(\epsilon(k),\epsilon(k),\ldots,\epsilon(k)),
\end{align*}
 and $\epsilon\neq\vec{0}$, then
we have $\theta_k\neq \xi$ and $\beta_k\neq \xi$ for any $1\leqslant k\leqslant d$.

It follows from Claim 4 that
\begin{align*}
 \rho((\vec{g}_n\cdot \epsilon)x,A_nx)
   &= \rho\big(A_n\prod_{k=1}^{d}(\vec{a}_{dn+k}\cdot \theta_k)x,A_nx\big)\\
& <d\delta_{dn+1},
\end{align*}
and
\begin{align*}
 \rho(A_nx,A_{n+1}x)
  &=\rho( A_nx,A_n \prod_{j=1}^{d}(\vec{a}_{d(n+1)-j+1}\cdot \beta_j)x)\\
& < d\delta_{dn+1}.
\end{align*}

Therefore, we have
\begin{align*}
& \rho((\vec{g}_n\cdot \epsilon)x,(\vec{g}_{n+1}\cdot \epsilon)x) \\
\leqslant  \; &\rho((\vec{g}_n\cdot \epsilon)x,A_nx)
+\rho(A_nx,A_{n+1}x)
+\rho(A_{n+1}x,(\vec{g}_{n+1}\cdot \epsilon)x)\\
<\;& d\delta_{dn+1}+d\delta_{dn+1}+d\delta_{dn+d+1}\\
< \;&3 d\delta_{dn+1},
\end{align*}
and thus for $m\geqslant n+2$
\begin{align*}
\rho((\vec{g}_n\cdot \epsilon)x,(\vec{g}_{m+1}\cdot \epsilon)x)
  &\leqslant\sum_{k=n}^{m}\rho((\vec{g}_k\cdot \epsilon)x,(\vec{g}_{k+1}\cdot \epsilon)x) \\
&< \sum_{k=n}^{m}3 d\delta_{dk+1},
\end{align*}
which implies that
$\{ (\vec{g}_n\cdot \epsilon)x\}_{n\in \mathbb{N}}$
is a Cauchy sequence in $X$.
\end{proof}

For $\epsilon\in \{0,1\}^d\backslash\{\vec{0}\}$,
let $z_\epsilon=\lim\limits_{n\to \infty}(\vec{g}_n\cdot \epsilon)x$
and let $z_{\vec{0}}=y$.

  \medskip
\noindent {\bf Claim 6}:
$\lim\limits_{n\to \infty}\lim\limits_{m\to \infty}
\rho( (\vec{g}_n\cdot\epsilon)^{-1}(\vec{g}_m\cdot \vec{1}) x,
\vec{g}_n\cdot(\vec{1}-\epsilon)x )=0$
for every $\epsilon\in \{0,1\}^d\backslash\{\vec{1}\}$.
\begin{proof}[Proof of Claim 6]Fix $\epsilon\in \{0,1\}^d\backslash\{\vec{1}\}$.
For $m\geqslant n+2d$, we have
\begin{equation}\label{5}
(\vec{g}_n\cdot\epsilon)^{-1}(\vec{g}_m\cdot \vec{1})=A\cdot B\cdot C\cdot D,
\end{equation}
where
\begin{equation}\label{ct}
A=\prod_{j=1}^{d}\prod_{k=j}^{d}a_k^{(dm+j)}=\prod_{k=1}^{d}\vec{a}_{dm+k}\cdot \theta_k(\vec{1}),\quad
B=\prod_{i=d(n+1)+1}^{dm}(\vec{a}_{i}\cdot \vec{1}),
\end{equation}
and
\begin{equation}\label{fh}
C=\prod_{k=1}^{d}\big(\vec{a}_{dn+k}\cdot(\vec{1}-\theta_k(\epsilon))\big),\quad
D=
\prod_{i=1}^{n}\prod_{j=1}^{d}\big(\vec{a}_{di-j+1}\cdot(\vec{1}- \beta_j(\epsilon))\big).
\footnote{See \eqref{def-omega} and  \eqref{def-epsilon-tilde}  for the definition of $\theta_k$ and $\beta_j$.}
\end{equation}

It is easy to check that $BCD\in H_{dm+1}$,
$CD\in H_{dn+d+1}$
and $D\in H_{dn+1}$.

By \eqref{g}  we have that
\[
\vec{g}_n\cdot(\vec{1}-\epsilon)  =\prod_{k=1}^{d}(\vec{a}_{dn+k}\cdot \theta_k(\vec{1}-\epsilon))
\cdot \prod_{i=1}^n\prod_{j=1}^{d}(\vec{a}_{di-j+1}\cdot \beta_j(\vec{1}-\epsilon)) .
\]

By the definition \eqref{def-epsilon-tilde} of $\beta_j$, we have
 \[
  \beta_j(\vec{1}-\epsilon)=(1-\epsilon(j),\ldots,1-\epsilon(j)),
 \]
which implies $ \beta_j(\vec{1}-\epsilon)=\vec{1}-\beta_j(\epsilon)$.
It follows that
\[
\vec{g}_n\cdot(\vec{1}-\epsilon)  =\prod_{k=1}^{d}(\vec{a}_{dn+k}\cdot \theta_k(\vec{1}-\epsilon))
\cdot D .
\]

Recall  that
\[
\theta_k(\vec{1}-\epsilon)=(0,\ldots,0,1-\epsilon(k),\ldots,1-\epsilon(d),0)\]
 and $\epsilon\neq\vec{1}$, then
we have $\theta_k\neq \xi$ for any $1\leqslant k\leqslant d$ and thus by  Claim 4  
\begin{align*}
 \rho( \vec{g}_n\cdot(\vec{1}-\epsilon) x,Dx )
 &= \rho(D\prod_{k=1}^{d}(\vec{a}_{dn+k}\cdot \theta_k(\vec{1}-\epsilon))  x,Dx)\\
 &<d\delta_{dn+1}.
\end{align*}

It follows from \eqref{5} \eqref{ct} \eqref{fh}  and Claim 4 that
\begin{align*}
   & \rho( (\vec{g}_n\cdot\epsilon)^{-1}(\vec{g}_m\cdot \vec{1}) x,\vec{g}_n\cdot(\vec{1}-\epsilon)x ) \\
 \leqslant\;& \rho(DCBAx,Dx)+\rho(Dx,\vec{g}_n\cdot(\vec{1}-\epsilon)x)\\
\leqslant\; &\rho(DCBAx,DCBx)+\rho(DCBx,DCx)+\rho(DCx,Dx)+d\delta_{dn+1}\\
<\;&d\delta_{dm+1}+\sum_{i=d(n+1)+1}^{dm}\delta_i+2d\delta_{dn+1},
\end{align*}
which implies
$\lim\limits_{n\to \infty}\lim\limits_{m\to \infty}
\rho( (\vec{g}_n\cdot\epsilon)^{-1}(\vec{g}_m\cdot \vec{1}) x,
\vec{g}_n\cdot(\vec{1}-\epsilon)x )=0$.
\end{proof}

\noindent {\bf Claim 7}:
$\lim\limits_{n\to \infty}\lim\limits_{m\to \infty}
\rho( (\vec{g}_n\cdot\vec{1})^{-1}(\vec{g}_m\cdot \vec{1}) x,a_{d+1}^{(dn+1)}x )=0$.

\begin{proof}[Proof of Claim 7]
For $m\geqslant n+2d$, by \eqref{5} we have 
\[
(\vec{g}_n\cdot\vec{1})^{-1}(\vec{g}_m\cdot \vec{1})
=A\cdot B\cdot C,
\]
where
\[
A=\prod_{k=1}^{d}\vec{a}_{dm+k}\cdot \theta_k(\vec{1}),\;
B=\prod_{i=d(n+1)+1}^{dm}(\vec{a}_{i}\cdot \vec{1}),\;
C=\prod_{k=1}^{d}\big(\vec{a}_{dn+k}\cdot(\vec{1}-\theta_k(\vec{1}))\big).
\]

Recall that 
\[
\theta_k(\vec{1})=(\underbrace{0,\ldots,0}_{k-1},\underbrace{1,\ldots,1}_{d-k+1},0),
\]
then
we have $\vec{1}-\theta_1(\vec{1})=\xi$ and $\vec{1}-\theta_k(\vec{1})\neq \xi$ for $2\leqslant k\leqslant d$ 
which implies
$\vec{a}_{dn+1}\cdot(\vec{1}-\theta_1(\vec{1}))=a_{d+1}^{(dn+1)}$.
Taking $C'=\prod_{k=2}^{d}\big(\vec{a}_{dn+k}\cdot(\vec{1}-\theta_k(\vec{1}))\big)$,
it follows from Claim 4 that
\begin{align*}
 & \rho( (\vec{g}_n\cdot\vec{1})^{-1}(\vec{g}_m\cdot \vec{1}) x,a_{d+1}^{dn+1}x ) \\
 =\;& \rho(ABC' a_{d+1}^{(dn+1)}x,a_{d+1}^{(dn+1)}x)\\
 \leqslant\;&      \rho(a_{d+1}^{(dn+1)}C'BA x,a_{d+1}^{(dn+1)}C'Bx)+ \rho(a_{d+1}^{(dn+1)}C'Bx,a_{d+1}^{(dn+1)}C'x)    \\
 &\quad\quad + \rho(a_{d+1}^{(dn+1)}C'x,a_{d+1}^{(dn+1)}x)\\
< \;&d\delta_{dm+1}+\sum_{i=d(n+1)+1}^{dm}\delta_i+(d-1)\delta_{dn+2},
\end{align*}
which implies
$\lim\limits_{n\to \infty}\lim\limits_{m\to \infty}
\rho( (\vec{g}_n\cdot\vec{1})^{-1}(\vec{g}_m\cdot \vec{1}) x,a_{d+1}^{(dn+1)}x )=0$.
\end{proof}

From Claim 6,
$\lim\limits_{n\to \infty}(\vec{g}_n\cdot\epsilon)^{-1}z_{\vec{1}}=z_{\vec{1}-\epsilon}$ for $\epsilon\in \{0,1\}^d\backslash \{\vec{1}\}$.
By the construction of the sequence $\{\vec{a}_n\}_{n\in \mathbb{N}}$,
we have
$\lim\limits_{n\to \infty}(\vec{a}_n\cdot\epsilon)x=x_\epsilon$ for
all $\epsilon\in \{0,1\}^d\backslash\{\vec{0}\}$.
In particular,
$\lim\limits_{n\to \infty}a_{d+1}^{(n)}x=\lim\limits_{n\to \infty}(\vec{a}_n\cdot\xi)x
=x_\xi=y$. Thus by Claim 7,
$\lim\limits_{n\to \infty}(\vec{g}_n\cdot\vec{1})^{-1}z_{\vec{1}}=y=z_{\vec{0}}$ .

This completes the proof.
\end{proof}

\bibliographystyle{amsplain}

\begin{thebibliography}{99}
\bibitem{EG60}
R. Ellis and W. Gottschalk,
{\it Homomorphisms of transformation groups},
Trans. Amer. Math. Soc. {\bf 94} (1960), 258--271.


\bibitem{HKbook} 
B. Host and B. Kra, 
{\it Nilpotent Structures in Ergodic Theory},
Mathematical Surveys and Monographs {\bf 236}, AMS, 2018.

\bibitem{HKM}
B. Host, B. Kra and A. Maass,
{\it Nilsequences and a structure theory for topological dynamical systems},
Adv. Math. {\bf 224} (2010), no. 1, 103--129.

\bibitem{HSY} 
W. Huang, S. Shao and X. Ye, 
{\it Nil Bohr-sets and almost automorphy of higher order}, Mem. Amer. Math. Soc. 241 (2016), no. 1143, v+83 pp.


\bibitem{GGY} 
E. Glasner, Y. Gutman and X. Ye, 
{\it Higher order regionally proximal equivalence relations for general group actions},  
Adv. Math. {\bf 333}  (2018), 1004--1041.





\bibitem{SY12}
S. Shao and X. Ye,
{\it Regionally proximal relation of order d is an equivalence one for minimal systems and a combinatorial consequence},
Adv. Math. {\bf 231} (2012), no. 3--4, 1786--1817.

\bibitem{VAW68}
W. A. Veech,
{\it The equicontinuous structure relation for minimal Abelian transformation groups},
Amer. J. Math. {\bf 90} (1968), 723--732.

\end{thebibliography}

\end{document}